\documentclass[a4paper, 12pt]{article}
\usepackage{amsfonts}
\usepackage{amssymb}
\usepackage{amsthm}
\usepackage{amsmath}
\usepackage{mathtext}
\usepackage[utf8x]{inputenc}
\usepackage[english]{babel}
\usepackage{color}
\usepackage{euscript,amssymb,amsfonts,amsmath,amsthm,graphicx}
\usepackage{mathrsfs}
\usepackage[left=3cm,right=1.5cm,top=2cm,bottom=2cm,bindingoffset=0cm]{geometry}
\usepackage{enumerate}

\newtheorem{theorem}{\indent \sc Theorem}

\newtheorem{cor}{\indent \sc Corollary}

\newcommand{\eps}{\varepsilon}

\newcommand{\R}{\mathbb R}
\newcommand{\N}{\mathbb N}
\newcommand{\E}{{\sf E}}
\newcommand{\D}{{\sf D}}
\newcommand{\Prob}{{\sf P}}

\renewcommand{\le}{\leqslant}
\renewcommand{\ge}{\geqslant}
\newcommand{\I}{{\bf1}}

\newcommand{\eqd}{\stackrel{d}{=}}

\newcommand{\abs}[1]{\left|#1\right|}

\newcommand{\G}{\mathcal G}

\newcommand{\qt}{\sigma^2}
\newcommand{\gopt}{\gamma_*}
\newcommand{\scE}{{\textsc{e}}}
\newcommand{\scR}{{\textsc{r}}}
\newcommand{\es}{L_{\scE,n}}
\newcommand{\roz}{L_{\scR,n}}
\newcommand{\essC}{A_\scE}
\newcommand{\rozC}{A_\scR}
\newcommand{\CE}{C_\scE}
\newcommand{\CR}{C_\scR}

\newcommand{\osiM}{\Lambda_n}
\newcommand{\essM}{M_n}
\usepackage{multirow}

\sloppy

\date{}

\title{On natural convergence rate estimates in the Lindeberg theorem\thanks{Research on Theorem~\ref{ThAWanalog} was supported by the Russian Science Foundation (Project No.\,18-11-00155). The rest part of the paper was supported by the Russian Foundation for Basic Research (project 19-07-01220-a) and by the Ministry for Education and Science of Russia (grant No.~MD--189.2019.1).}}

\author{Ruslan Gabdullin\thanks{Lomonosov Moscow State University, Moscow, Russia}, Vladimir Makarenko\thanks{Lomonosov Moscow State University, Moscow, Russia}, and Irina Shevtsova\footnote{Corresponding author. Hangzhou Dianzi University, Hangzhou, China. Lomonosov Moscow State University, Moscow, Russia. Institute of Informatics Problems of Federal Research Center ``Computer Science and Control,'' Russian Academy of Sciences, Moscow, Russia. E-mail: ishevtsova@cs.msu.ru}}

\begin{document}

\maketitle

\begin{abstract}
We prove two estimates of the rate of convergence in the Lindeberg theorem, involving algebraic truncated third-order moments and the classical Lindeberg fraction, which generalize a series of inequalities due to  (Esseen, 1969), (Rozovskii, 1974), (Wang, Ahmad, 2016), some of our recent results in (Gabdullin, Makarenko, Shevtsova, 2018, 2019) and, up to constant factors, also (Katz, 1963), (Petrov, 1965), (Osipov, 1966). The technique used in the proof is completely different from that in (Wang, Ahmad, 2016) and is based on some extremal properties of introduced fractions which has not been noted  in (Katz, 1963), (Petrov, 1965), (Wang, Ahmad, 2016).
\end{abstract}

{\bf Keywords:} central limit theorem, normal approximation, Lindeberg's condition, natural convergence rate estimate, truncated moment, absolute constant

\section{Introduction and notation}

Let $X_1, X_2, \ldots, X_n$ be independent random variables (r.v.'s) with distribution functions (d.f.'s) $F_k(x) = \Prob(X_k < x)$, $x\in\R,$ expectations $\E X_k = 0$, and variances $\sigma_k^2 = \D X_k$, $k=1,\ldots,n$, such that
$$
B_n^2:= \sum_{k=1}^n \sigma_k^2>0.
$$
Let us denote
$$
S_n = X_1 + X_2 + \cdots + X_n, \quad %B_n^2 = \D S_n = \sum_{k=1}^n \sigma_k^2, \quad 
\widetilde S_n = \frac{S_n-\E S_n}{\sqrt{\D S_n}} = \sum_{k=1}^n \frac{X_k}{B_n},
$$
$$
\overline{F}_n(x) = \Prob(\widetilde S_n < x), \quad \Phi(x) = \frac{1}{\sqrt{2\pi}} \int_{-\infty}^x e^{-t^2/2}\,dt, \quad x \in \R,
$$
$$
\Delta_n= \sup_{x \in \R} \abs{\overline{F}_n(x) - \Phi(x)},
$$
$$
\sigma_k^2(z) = \E X_k^2 \I(\abs{X_k} \ge z), \quad \mu_k(z) = \E X_k^3 \I(\abs{X_k} < z),\quad k=1,\ldots,n
$$
$$
L_n(z) = \frac{1}{B_n^2} \sum_{k=1}^n \sigma_k^2(zB_n) = \frac{1}{B_n^2} \sum_{k=1}^n \E X_k^2\I (\abs{X_k} \ge zB_n), \quad z \ge 0,
$$
so that $L_n(0)=1$. The function $L_n(z)$ is called the \textit{Lindeberg fraction}. In the case where $X_1,\ldots,X_n$ are independent and identically distributed (i.i.d.) we shall denote their common distribution function by $F$. %and set $\Delta_n(F):=\Delta_n(F,\ldots,F)$.

Let  $\G$ be a set of all increasing functions $g\colon(0,\infty)\to(0,\infty)$ such that the function $z / g(z)$ is also increasing for $z>0$ (for convenience, here and in what follows, we use the terms ``increasing'' and ``decreasing'' in a wide sense, i.\,e. ``non-decreasing'', ``non-increasing''). Where appears, the value $g(0)$ is assumed to be an arbitrary non-negative number. The class~$\G$ was originally introduced by Katz~\cite{Katz1963} and further used in~\cite{Petrov1965,KorolevPopov2012,KorolevDorofeyeva2017,GabdullinMakarenkoShevtsova2019}. In~\cite{GabdullinMakarenkoShevtsova2019} it was proved that:  
\begin{itemize}
\item[\rm(i)] For every $g \in \G$ and $a > 0$ we have
\begin{equation}\label{gmin_g_gmax_ineq}
g_0(z, a):=\min\left\{\frac{z}{a}, 1\right\}\ \le\ \frac{g(z)}{g(a)}\ \le\ \max\left\{\frac{z}{a}, 1\right\}:= g_1(z, a),\quad z>0,
\end{equation}
moreover $g_0(\,\cdot\,, a),$ $g_1(\,\cdot\,, a) \in \G.$
\item[\rm(ii)] Every function from $\G$ is continuous on $(0,\infty);$
\end{itemize}
Property~\eqref{gmin_g_gmax_ineq} means that, asymptotically, every function $g\in\G$ is between a constant and a linear function as its argument goes to infinity. For example, besides $g_0$, $g_1$, the class $\G$ also includes the following functions:
$$
g_{\textsc{c}}(z)\equiv 1,\quad g_*(z)=z,\quad c \cdot z^{\delta}, \quad c\cdot g(z),\qquad z>0,
$$
for every $c > 0$, $\delta \in [0,1]$, and $g\in\G$.

For every function $g\in\G$ such that $\E X_k^2g(\abs{X_k}) < \infty,$ $k=1,\ldots,n$, Katz~\cite{Katz1963} (in 1963, for identically distributed random summands) and Petrov~\cite{Petrov1965} (in 1965, in the general situation) proved that
\begin{equation} \label{Katz_Petrov_ineq}
\Delta_n \le \frac{A_1}{B_n^2g(B_n)}\sum_{k=1}^n \E X_k^2g(\abs{X_k}), 
\end{equation}
$A_1$ being a universal constant whose best known upper bound $A_1\le1.87$ is due to Korolev and Dorofeyeva~\cite{KorolevDorofeyeva2017}. A few years earlier, letting $n=1$ and using results of~\cite{Agnew1957} (see also~\cite{BhattacharyaRangaRao1982}) and~\cite{KondrikMikhailovChebotarev2006} (see also~\cite{ChebotarevKondrikMikhailov2007}),  Korolev and Popov~\cite{KorolevPopov2012} established a lower bound
$$
A_1\ge \sup_{x>0}\abs{\frac1{1+x^2}-\Phi(-x)}=0.54093\ldots,
$$
which remains the best known one until now. For the sake of unambiguity, here and in what follows,  by constants appearing in majorizing expressions we mean their least possible values guaranteeing the validity of the corresponding inequalities for all parameters under consideration.

Inequality~\eqref{Katz_Petrov_ineq} with $g(x) = \min \{x/B_n, 1\}$ yields
\begin{equation} \label{Osipov_eps1_ineq}
\Delta_n \le A_2 \cdot (L_n(1) + \osiM(1))
\end{equation}
where $A_2\le A_1$ and
$$
\osiM(\eps) :=  \frac{1}{B_n^3}\sum_{k=1}^n \E|X_k|^3\I(|X_k|<\eps B_n) =  \frac{1}{B_n^3}\sum_{k=1}^n \int_{\abs{x} < \eps B_n}\abs{x}^3\,dF_k(x),\quad \eps>0.
$$
As it was pointed out in~\cite{Loh1975,Paditz1984}, 
$$
\E|X|^3\I(|X|<1)+ \E X^2\I(|X|\ge1)= \E X^2\min\{ 1,|X|\}(\I(X\in B) +\I(X\notin B))
$$
$$
\le \E|X|^3\I(X \in B) + \E X^2\I(X\notin B)
$$
for any Borel $B\subseteq\R$, so that
% (see also~\cite[p.\,849]{ShGabdMak2018}, \cite{GabdMakSh2019})
$$
L_n(1) + \osiM(1) = \inf_{\eps > 0} (L_n(\eps) + \osiM(\eps)),
$$
and, hence, \eqref{Osipov_eps1_ineq} yields the inequality 
\begin{equation} \label{Osipov_ineq}
\Delta_n \le A_2 \cdot  (L_n(\eps) + \osiM(\eps)) \quad\text{for every }\eps>0.
\end{equation}
Inequality~\eqref{Osipov_ineq} was proved by Osipov~\cite{Osipov1966} in 1966, independently of~\eqref{Katz_Petrov_ineq}. Since 
\begin{equation}\label{Lambda(eps)<=eps}
\osiM(\eps)\le\frac{\eps}{B_n^2}\sum_{k=1}^n\E X_k^2\I(|X_k|<\eps B_n)\le\eps,
\end{equation}
inequality~\eqref{Osipov_ineq} trivially yields the Lindeberg theorem~\cite{Lindeberg1922} which states the sufficiency of the Lindeberg condition
$$
\lim_{n\to\infty}L_n(\eps)=0\quad\text{for all }\eps>0\eqno (L)
$$
for the validity of the CLT 
$$
\lim\limits_{n\to\infty}\Delta_n=0.\eqno ({CLT})
$$
On the other hand, according to Feller's theorem~\cite{Feller1935}, for the asymptotically negligible random summands, i.e. satisfying the Feller condition
$$
\lim_{n\to\infty}\max_{1\le k\le n}\frac{\sigma_k^2}{B_n^2}=0,\eqno (F)
$$
condition $(CLT)$ yields $(L)$. Taking into account also Feller's theorem, it is easy to conclude that the right- and the  left-hand sides of~\eqref{Osipov_ineq} either tend or do not tend to zero simultaneously, once the random summands are uniformly asymptotically negligible  in the sense of $(F)$. Thus, using Zolotarev's classification~\cite{Zolotarev1986}, inequality~\eqref{Osipov_ineq} can be called a \textit{natural} convergence rate estimate in the Lindeberg theorem.

In 1984 Paditz~\cite{Paditz1984} observed without proof that $A_1=A_2$; in  2012 Korolev and Popov~\cite{KorolevPopov2012} proved that $A_1 = A_2$. In other words, the function $g(x)=g_0(x,B_n)=\min\{x/B_n, 1\}$ minimizes the right-hand side of~\eqref{Katz_Petrov_ineq} (observe that this fact also trivially follows from property~\eqref{gmin_g_gmax_ineq} of the functions $g\in\mathcal G$ proved in~\cite{GabdullinMakarenkoShevtsova2019}). Hence, inequalities \eqref{Katz_Petrov_ineq}, \eqref{Osipov_eps1_ineq}, and \eqref{Osipov_ineq} are equivalent.

On the other hand, inequality~\eqref{Katz_Petrov_ineq} with $g(x)=x$ reduces to the celebrated Berry--Esseen inequality~\cite{Berry1941, Esseen1942} up to the constant factor $A_1$, for which Shevtsova~\cite{Shevtsova2013Inf} provides  an improved upper bound: $A_1\le0.4690$ in the  i.i.d. case and $A_1\le0.5583$ in the general situation.
%The best known upper bound for the constant $A_1$ in~\eqref{Katz_Petrov_ineq} with $g(x)=x$ is obtained by Shevtsova~\cite{Shevtsova2013Inf} and has the form: $A_1\le0.4690$ in the  i.i.d. case and $A_1\le0.5583$ in the general situation.

In 1969 Esseen~\cite{Esseen1969}  managed to replace the \textit{absolute} truncated third order moments $\osiM(\eps)$ in Osipov's inequality~\eqref{Osipov_ineq} with  absolute values of the \textit{algebraic} ones and to prove that
\begin{equation} \label{Esseen_ineq}
\Delta_n \le \frac{A_3}{B_n^3} \sum_{k=1}^n \sup_{z > 0} \left\{\abs{\mu_k(z)}+z\sigma_k^2(z)\right\},
\end{equation}
with $A_3$ being an absolute constant. Moreover, in the same paper, by use of the traditional truncation techniques, Esseen provided a sketch of the proof of a bounded version of his inequality~\eqref{Esseen_ineq}:
\begin{equation} \label{Essen_limited_ineq}
\Delta_n \le \frac{A_4}{B_n^3} \sum_{k=1}^n \sup_{0 < z < B_n} \left\{\abs{\mu_k(z)}+z\sigma_k^2(z)\right\},
\end{equation}
which trivially yields~\eqref{Osipov_eps1_ineq} with $A_2\le A_4$, since $|\mu_k(z)|\le\E|X_k|^3\I(|X_k|<z)$ and the function 
$$
\E|X_k|^3\I(|X_k|<z)+z\sigma_k^2(z)=\E X_k^2\min\{|X_k|,z\}
$$
is monotonically increasing with respect to $z\ge0$ for every $k=1,\ldots,n$.  Hence,
$$
%A_1=
A_2\le A_4,\quad A_3\le A_4.
$$
Observe that, due to the left-continuity of the functions $\mu_k(z)$ and $\qt_k(z)$, $k=1,\ldots,n,$ for $z>0$,
the least upper bound over $z\in(0,B_n)$ in~\eqref{Essen_limited_ineq}  can be
replaced by the one over the set $z\in(0,B_n].$ In the i.i.d. case,  Esseen's inequality~\eqref{Essen_limited_ineq} takes the form
$$
\Delta_n\le \frac{A_4}{\sigma_1^3\sqrt{n}}\sup_{0<z\le \sigma_1\sqrt{n}} \left\{\abs{\mu_1(z)} + z\qt_1(z)\right\}
$$
and trivially yields the ``if'' part of Ibragimov's criteria~\cite{Ibragimov1966}, according to which, 
%in the i.i.d. case, 
$\Delta_n=\mathcal O(n^{-1/2})$ as $n\to\infty$ if and only if
$$
\mu_1(z)=\mathcal O(1),\quad z\qt_1(z)=\mathcal O(1),\quad z\to\infty.
$$
The values of $A_3$ and $A_4$ remained unknown for a long time. Only in 2018 the present authors~\cite{GabdullinMakarenkoShevtsova2018} proved that $A_3\le2.66$ and $A_4\le2.73$.

In 1974 Rozovskii \cite{Rozovskii1974} proved that
\begin{equation} \label{Rozovskii_ineq}
\Delta_n \le \frac{A_5}{B_n^3} \sum_{k=1}^n \left( \abs{\mu_k(B_n)} + \sup_{0 < z < B_n} z \sigma_k^2(z) \right),
\end{equation}
where $A_5$ is an absolute constant whose value also remained unknown for a long time until the present authors deduced in~\cite{GabdullinMakarenkoShevtsova2018} that $A_5\le2.73$.
% until 2018, when the present authors published paper~\cite{GabdullinMakarenkoShevtsova2018} where it was proved, in particular, that $A_5\le2.73$. 
In~\cite[Section~5]{GabdullinMakarenkoShevtsova2018}, it is also shown that Esseen's and Rozovskii's fractions (right-hand sides of~\eqref{Essen_limited_ineq} and~\eqref{Rozovskii_ineq},  ignoring the constant factors $A_4$ and $A_5$) are incomparable even in the i.i.d. case, that is, Rozovskii's fraction may be less and may be greater than Esseen's fraction. 

Adopting ideas of Katz and Petrov~\cite{Katz1963,Petrov1965}, recently, Wang and Ahmad~\cite{WangAhmad2016} generalized Esseen's inequality~\eqref{Esseen_ineq} to
\begin{equation} \label{AW_classic_ineq}
\Delta_n \le \frac{A_6}{B_n^2g(B_n)}  \sum_{k=1}^n \sup_{z > 0}  \frac{g(z)}{z}  \Big(\abs{\mu_k(z)} + z\sigma_k^2(z) \Big), \quad g\in\G,
\end{equation}
where $A_6$ is an absolute constant whose value has not been given in~\cite{WangAhmad2016}. One can make sure that inequality~\eqref{AW_classic_ineq} trivially yields~\eqref{Katz_Petrov_ineq} with $A_1\le A_6$ (for the complete proof, see~\cite[p.\,648]{GabdullinMakarenkoShevtsova2019}) and with $g(z) = z$ reduces to~\eqref{Esseen_ineq} with $A_3\le A_6$. In~\cite{GabdullinMakarenkoShevtsova2019} it was shown that $A_6\le2.73$.
 
Inequalities~\eqref{Esseen_ineq},~\eqref{Essen_limited_ineq}, and~\eqref{Rozovskii_ineq} were improved and generalized in~\cite{GabdullinMakarenkoShevtsova2018} to
\begin{equation} \label{Esseen2018}
\Delta_n \le \frac{\essC(\eps,\gamma)}{B_n^3}  \sup_{0 < z < \eps B_n} \left\{ \gamma\abs{\sum_{k=1}^n \mu_k(z)} + z \sum_{k = 1}^n \sigma_k^2(z) \right\} =: \essC(\eps,\gamma) \cdot\es(\eps,\gamma),
\end{equation}
\begin{equation} \label{Rozovskii2018}
\Delta_n \le \frac{\rozC(\eps,\gamma)}{B_n^3}  \left( \gamma\abs{\sum_{k=1}^n \mu_k(\eps B_n)} + \sup_{0 < z < \eps B_n} z \sum_{k=1}^n \sigma_k^2(z)  \right) =: \rozC(\eps,\gamma) \cdot \roz(\eps,\gamma),
\end{equation}
for every $\eps>0$ and $\gamma>0$, where $\essC(\eps,\gamma),\,\rozC(\eps,\gamma)$ depend only on $\eps$ and $\gamma$, both are monotonically decreasing with respect to $\gamma>0$, and $\essC(\eps,\gamma)$ is also monotonically decreasing with respect to $\eps>0$. In particular, 
$$
A_4\le \essC(1,1) \le \essC(1,0.72) \le 2.73,
$$
$$
A_3\le \essC(+\infty,1)\le\max\{\essC(+\infty,0.97),\essC(4.35,1)\} \le 2.66,
$$
$$
A_5\le \rozC(1,1)\le \rozC(1,\gopt) \le 2.73,
$$
where
$$
\gopt=1/\sqrt{6\varkappa}=0.5599\ldots,\quad \varkappa=x^{-2}\sqrt{(\cos x-1+x^2/2)^2+(\sin x-x)^2}\Big|_{x=x_0}=0.5315\ldots,
$$
and $x_0=5.487414\ldots$ is the unique root of the equation
$$
8(\cos x - 1) + 8x\sin x - 4x^2\cos x - x^3\sin x = 0,\quad x\in(\pi,2\pi).
$$
Moreover, the functions $\essC(\eps,\gamma)$ and $\rozC(\eps,\gamma)$ are unbounded as $\eps\to0+$ for every $\gamma$, since $\lim\limits_{\eps\to0}\es(\eps,\gamma)=\lim\limits_{\eps\to0}\roz(\eps,\gamma)=0$. The question on boundedness of $\rozC(+\infty,\gamma)$ remains open. The values of $\essC(\eps,\gamma)$ and $\rozC(\eps,\gamma)$ for some other $\eps$ and $\gamma$ computed in~\cite{GabdullinMakarenkoShevtsova2018} are presented in tables~\ref{Tab:EsseenC1(L)} and~\ref{Tab:RozovskiiC1(L)}, respectively.  Observe that, in the i.i.d. case, the fractions $\es(1,1)$ and $\es(\infty,1)$ coincide with the Esseen fractions in~\eqref{Esseen_ineq} and~\eqref{Essen_limited_ineq}, respectively, and $\roz(1,1)$ coincides with the Rozovskii fraction in~\eqref{Rozovskii_ineq}, so that
$$
A_3=\essC(\infty,1),\quad A_4=\essC(1,1),\quad A_5=\rozC(1,1)\quad\text{in the i.i.d. case.}
$$

\begin{table}[h]
\begin{center}
\begin{tabular}{||c|c|c||c|c|c||c|c|c||}
\hline
$\eps$&$\gamma$&$\essC(\eps,\gamma)$& $\eps$&$\gamma$&$\essC(\eps,\gamma)$& $\eps$&$\gamma$&$\essC(\eps,\gamma)$
\\ \hline %E
$1.21$&$0.2$&$2.8904$&$\infty$&$\gopt$&$2.6919$&$2.65$&$4$&$2.6500$\\ \hline %E
$1.24$&$0.2$&$2.8900$&$1$&$0.72$&$2.7298$&$2.74$&$3$&$2.6500$\\ \hline %E
$\infty$&$0.2$&$2.8846$&$1$&$\infty$&$2.7286$&$3.13$&$2$&$2.6500$\\ \hline %E
$1.76$&$0.4$&$2.7360$&$4.35$&$1$&$2.6600$&$4$&$1.62$&$2.6500$\\ \hline %E
$5.94$&$0.4$&$2.7300$&$\infty$&$1$&$2.6588$&$5.37$&$1.5$&$2.6500$\\ \hline %E
$\infty$&$0.4$&$2.7299$&$\infty$&$0.97$&$2.6599$&$\infty$&$1.43$&$2.6500$\\ \hline %E
$1$&$\gopt$&$2.7367$&$2.56$&$\infty$&$2.6500$&$\infty$&$\infty$&$2.6409$\\ \hline %E
$1.87$&$\gopt$&$2.6999$&$2.62$&$5$&$2.6500$&$0+$&$\forall$&$\infty$\\ \hline %E
\end{tabular}
\end{center}
\caption{Upper bounds for $\essC(\eps,\gamma)$ in~\eqref{Esseen2018}. Recall that $\gopt=0.5599\ldots$}
\label{Tab:EsseenC1(L)}
\end{table}

\begin{table}[h]
\begin{center}
\begin{tabular}{||c|c|c||c|c|c||}
\hline
$\eps$&$\gamma$&$\rozC(\eps,\gamma)$&
$\eps$&$\gamma$&$\rozC(\eps,\gamma)$
\\ \hline
$1.21$&$0.2$&$2.8700$&$1.99$&$\gopt$&$2.6600$\\ \hline %R
$5.39$&$0.2$&$2.8635$&$2.12$&$\gopt$&$2.6593$\\ \hline %R
$1.76$&$0.4$&$2.6999$&$3$&$\gopt$&$2.6769$\\ \hline %R
$2.63$&$0.4$&$2.6933$&$5$&$\gopt$&$2.7562$\\ \hline %R
$0.5$&$\gopt$&$3.0396$&$0+$&$\forall$&$\infty$\\ \hline %R
$1$&$\gopt$&$2.7286$&&&\\ \hline %R
\end{tabular}
\end{center}
\caption{Upper bounds for $\rozC(\eps,\gamma)$ in~\eqref{Rozovskii2018}. Recall that $\gopt=0.5599\ldots$}
\label{Tab:RozovskiiC1(L)}
\end{table}

%\begin{table}[h]
%\begin{center}
%\begin{tabular}{||c|c|c||}
%\hline
%$\eps$&$\gamma$&$C_R(\eps,\gamma)\le$
%\\ \hline
%$1.21$&$0.2$&$2.86991$\\ \hline %R
%$5.39$&$0.2$&$2.86343$\\ \hline %R
%$1.76$&$0.4$&$2.69985$\\ \hline %R
%$2.63$&$0.4$&$2.69323$\\ \hline %R
%$0.5$&$\gopt$&$3.03953$\\ \hline %R
%$1$&$\gopt$&$2.72857$\\ \hline %R
%$1.99$&$\gopt$&$2.65991$\\ \hline %R
%$2.12$&$\gopt$&$2.65925$\\ \hline %R
%$3$&$\gopt$&$2.67687$\\ \hline %R
%$5$&$\gopt$&$2.75611$\\ \hline %R
%\end{tabular}
%\end{center}
%\caption{Upper bounds for $C_R(\eps,\gamma)$ in~\eqref{Rozovskii2018}. Recall that $\gopt=0.5599\ldots\ .$}
%\label{Tab:RozovskiiC1(L)}
%\end{table}

Using the notation
$$
\essM(z):=\frac1{B_n^3}\sum_{k=1}^n \mu_k(zB_n) = \frac1{B_n^3}\sum_{k=1}^n \E X_k^3\I(|X_k|<zB_n),\quad z\ge0,
$$
one can rewrite inequalities~\eqref{Esseen2018} and~\eqref{Rozovskii2018} as
$$
\Delta_n \le \essC(\eps,\gamma)\sup_{0 < z < \eps} \left\{ \gamma\abs{\essM(z)} + zL_n(z) \right\} ,
$$
$$
\Delta_n \le \rozC(\eps,\gamma)\Big(\gamma\abs{\essM(\eps)} + \sup_{0 < z < \eps} zL_n(z) \Big).
$$

Furthermore, inequalities \eqref{AW_classic_ineq} and~\eqref{Esseen2018} with $\gamma=1$ were generalized in our previous paper~\cite{GabdullinMakarenkoShevtsova2019} to
\begin{multline} \label{EsseenAW2018}
\Delta_n \le \frac{\CE(\eps)}{B_n^2 g(B_n)} \cdot \sup_{0 < z < \eps B_n} \frac{g(z)}{z} \left( \abs{\sum_{k=1}^n \mu_k(z)} + z \sum_{k = 1}^n \sigma_k^2(z) \right)
\\
=\CE(\eps)\sup_{0 < z < \eps} \frac{g(zB_n)}{zg(B_n)} \left( \abs{\essM(z)} + zL_n(z) \right), \quad \eps>0, \quad g \in \G,
\end{multline}
with $\CE(\eps)\le\essC(\min\{1,\eps\},1)$,  $\CE(+0)=\infty,$ so that $A_6\le\CE(\infty)$ with the equality in the i.i.d. case, and inequality~\eqref{Rozovskii2018} with $\gamma=1$ was generalized in~\cite{GabdullinMakarenkoShevtsova2019-2} to
\begin{multline} \label{Rozovskii2019}
\Delta_n \le \frac{\CR (\eps)}{B_n^2 g(B_n)} \left( \frac{g(\eps B_n)}{\eps B_n} \abs{\sum_{k=1}^n \mu_k(\eps B_n)} +  \sup_{0 < z < \eps B_n} g(z)\sum_{k = 1}^n \sigma_k^2(z) \right)
\\
=\CR(\eps)\left(\frac{g(\eps B_n)}{\eps g(B_n)}  \abs{\essM(\eps)} + \sup_{0 < z < \eps}\frac{g(zB_n)}{g(B_n)}\,L_n(z) \right), \quad \eps>0, \quad g \in \G,
\end{multline}
with $\CR(\eps)\le\max\{1,\eps\}\cdot \rozC(\eps,1)$,  $\CR(+0)=\CR(\infty)=\infty$. It is easy to see that~\eqref{EsseenAW2018} and~\eqref{Rozovskii2019} with $g(z)=g_*(z)(\equiv z)$ reduce, respectively, to~\eqref{Esseen2018} and~\eqref{Rozovskii2018} with $\gamma=1$ and $\essC(\,\cdot\,,1)\le\CE(\,\cdot\,)$, $\rozC(\,\cdot\,,1)\le\CR(\,\cdot\,)$,
so that
$$
\CE(\eps)=\essC(\eps,1),\quad \CR(\eps)=\rozC(\eps,1)\quad\text{for } \eps\in(0,1].
$$

%Summarizing relations between the constants in inequalities~\eqref{Katz_Petrov_ineq},  \eqref{Osipov_ineq}, \eqref{Essen_limited_ineq}, \eqref{Esseen2018}, and~\eqref{EsseenAW2018}, that were mentioned above we conclude that
%$$
%0.5409<A_1=A_2\le A_4\le \essC(1,1)=\CE(1)\le 2.73,\quad A_2\le1.87,
%$$
%$$
%A_3\le A_4,\quad A_1\le A_6,\quad A_3\le A_6\le \CE(\infty)\le2.73
%$$ 
%with equality between $A_4$ and $\essC(1,1)$ in the i.i.d. case. 
Though the constants $A_1=A_2\le1.87$ in Katz--Petrov's~\eqref{Katz_Petrov_ineq} and Osipov's~\eqref{Osipov_ineq} inequalities are more optimistic than $\essC(1,1)=\CE(1)\le2.73$, $\CR(1)=\rozC(1,1)\le2.73$, inequalities \eqref{Esseen2018}, \eqref{Rozovskii2018}, \eqref{EsseenAW2018}, and \eqref{Rozovskii2019} may be much sharper than~\eqref{Katz_Petrov_ineq} and \eqref{Osipov_ineq} due to the more favorable dependence of the appearing fractions on truncated third order moments $\abs{M_n(\,\cdot\,)}$, which vanishes, say, in the symmetric case, or for even $n$ and oscillating sequence $X_k\eqd(-1)^kX$, $k=1,\ldots,n,$ with one and the same r.v. $X$. In the above cases inequalities \eqref{Esseen2018}, \eqref{Rozovskii2018}  reduce to
$$
\Delta_n \le C_\eps\cdot\sup_{0 < z <\eps} zL_n(z),\quad C_\eps=\min\{\essC(\eps,\infty),\rozC(\eps,\infty)\}.
$$
%while~\eqref{EsseenAW2018}, \eqref{Rozovskii2019} reduce to
%$$
%\Delta_n \le \min\{\CE(\eps),\CR(\eps)\} \sup_{0 < z < \eps} %L_n(z)\frac{g(zB_n)}{g(B_n)}.
%$$
Since $L_n(z)$ is non-increasing and left-continuous, the least upper bound here must be attained in an interior of the interval $(0,\eps)$:
$$
\Delta_n \le C_\eps\cdot z_nL_n(z_n)\quad\text{with some } z_n\in(0,\eps).
$$
The sequence $\{z_n\}_{n\in\N}$ here may be infinitesimal as $n\to\infty$. This follows from Ibragimov and Osipov's result~\cite{IbragimovOsipov1966} who proved that, in general, the estimate $\Delta_n\le C L_n(z)$ cannot hold  with a fixed $z>0$, even in the symmetric i.i.d. case.

%~\eqref{EsseenAW2018} and~\eqref{Rozovskii2019} reduce to
%$$
%\Delta_n \le \frac{\min\{C(\eps),\widetilde C(\eps)\}}{B_n^2 g(B_n)} \sup_{0 < z < \eps B_n} g(z)\sum_{k = 1}^n \sigma_k^2(z) = \min\{C(\eps),\widetilde C(\eps)\}  \sup_{0 < z < \eps B_n} L_n\left (\frac{z}{B_n}\right ) \frac{g(z)}{ g(B_n)}.
%$$

\section{Motivation, Main Results and Discussion}

As it was noted before, the sum of truncated third order moments $\essM(\,\cdot\,)$ in~\eqref{EsseenAW2018} and~\eqref{Rozovskii2019} may be arbitrarily small or even vanish, so that the term depending on the Lindeberg fraction $L_n$ may be much more greater than the term  containing~$\essM$. Hence, it would be useful to have a possibility to balance the contribution of the terms $\abs{\essM}$ and $L_n$ to optimize the resulting bound. Similarly to~\eqref{Esseen2018} and~\eqref{Rozovskii2018}, let us introduce a balancing parameter $\gamma>0$ and for $\eps > 0$ and $g\in\G$ denote
\begin{multline} \label{L^3_def}
\es(g,\eps, \gamma) = \frac{1}{B_n^2 g(B_n)}  \sup_{0 < z < \eps B_n} \frac{g(z)}{z} \Bigg\{ \gamma \abs{\sum_{k = 1}^n \mu_k(z)} + z\sum_{k = 1}^n\sigma_k^2(z) \Bigg\} 
\\
= \sup_{0 < z < \eps} \frac{g(zB_n)}{zg(B_n)} \left( \gamma\abs{\essM(z)} + zL_n(z) \right),
\end{multline}
\begin{multline} \label{wave_L^3_def}
\roz(g,\eps, \gamma) = \frac{1}{B_n^2 g(B_n)}\left(\gamma\,\frac{g(\eps B_n)}{\eps B_n}\abs{\sum_{k=1}^n \mu_k(\eps B_n)}+\sup_{0<z < \eps B_n} g(z)\sum_{k=1}^n \sigma_k^2(z)\right) 
\\
= \gamma\,\frac{g(\eps B_n)}{\eps g(B_n)} \abs{\essM(\eps)} + \sup_{0 < z < \eps}\frac{g(zB_n)}{g(B_n)}\,L_n(z).
\end{multline}
Then $\es(g,\eps,1)$, $\roz(g,\eps,1)$ coincide with the corresponding fractions in the right-hand sides of~\eqref{EsseenAW2018}, \eqref{Rozovskii2019}; moreover, with 
$$
g(z)=g_*(z)\equiv z,\quad z>0,
$$
we have
$$
\es(g_*,\eps,\gamma) = \es(\eps,\gamma),\quad  \roz(g_*,\eps,\gamma) = \roz(\eps,\gamma), \quad \eps,\gamma>0.
$$
Moreover, $\es(\,\cdot\,,\eps,\,\cdot\,)$ is monotonically increasing with respect to $\eps>0$.

The main result of the present paper is the following 

\begin{theorem}\label{ThAWanalog}
For every $\eps > 0, \, \gamma > 0,$ and $g \in \G$ we have
\begin{equation} \label{AW_ineq}
\Delta_n \le \CE(\eps, \gamma)\cdot\es(g,\eps, \gamma),
\end{equation}
\begin{equation} \label{Rozovsky_ineq}
\Delta_n \le \CR(\eps, \gamma)\cdot\roz(g,\eps, \gamma),
\end{equation}
where 
$$
%\CR(\eps, \gamma) \le \max\{ \eps, 1 \} \cdot \rozC(\eps, \gamma), 
\begin{array}{rcll}
\CE(\eps, \,\cdot\,)&=&\essC(\eps, \,\cdot\,),\quad\ \eps\in(0,1],&\text{ in particular, } \CE(+0,\,\cdot\,)= \infty,
\\
\CE(\eps, \,\cdot\,)&\le&\essC(1, \,\cdot\,),\quad\ \eps>1;
\\
\CR(\eps,\,\cdot\,)&=&\rozC(\eps,\,\cdot\,),\quad\  \eps\in(0,1],&\text{ in particular, } \CR(+0,\,\cdot\,)= \infty,
\\
\CR(\eps,\,\cdot\,)&\le&\eps\rozC(\eps,\,\cdot\,),\quad \eps>1,
&\CR(\infty,\,\cdot\,) = \infty,
\end{array}
$$
and $\essC,$ $\rozC$ are as in~\eqref{Esseen2018}$,$ \eqref{Rozovskii2018}.
\end{theorem}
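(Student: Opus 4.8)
The plan is to derive both estimates from the already established inequalities \eqref{Esseen2018} and \eqref{Rozovskii2018}, which are precisely the special case $g=g_*$ of \eqref{AW_ineq} and \eqref{Rozovsky_ineq}, since $\es(g_*,\eps,\gamma)=\es(\eps,\gamma)$ and $\roz(g_*,\eps,\gamma)=\roz(\eps,\gamma)$. The single structural tool needed is the two-sided bound \eqref{gmin_g_gmax_ineq}: setting there $a=B_n$ and replacing the argument by $zB_n$, one gets, for every $g\in\G$ and $z>0$,
$$
\min\{z,1\}\le\frac{g(zB_n)}{g(B_n)}\le\max\{z,1\}.
$$
Thus each $g$-fraction dominates the corresponding $g_*$-fraction up to an explicit factor, and the bound on $\Delta_n$ carries over.

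For \eqref{AW_ineq}, the lower bound above gives $g(zB_n)/\bigl(zg(B_n)\bigr)\ge1$ for $0<z\le1$. Restricting the supremum in \eqref{L^3_def} to $z\in(0,\min\{1,\eps\})$ and using $\gamma|\essM(z)|+zL_n(z)\ge0$, I obtain
$$
\es(g,\eps,\gamma)\ge\sup_{0<z<\min\{1,\eps\}}\bigl(\gamma|\essM(z)|+zL_n(z)\bigr)=\es(\min\{1,\eps\},\gamma).
$$
Feeding this into \eqref{Esseen2018} taken at the parameter $\min\{1,\eps\}$ proves \eqref{AW_ineq} with the admissible value $\essC(\min\{1,\eps\},\gamma)$ for $\CE(\eps,\gamma)$, which equals $\essC(\eps,\gamma)$ for $\eps\le1$ and $\essC(1,\gamma)$ for $\eps>1$, matching the upper bounds asserted in the theorem.

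For \eqref{Rozovsky_ineq}, I would estimate the two summands of $\roz(g,\eps,\gamma)$ in \eqref{wave_L^3_def} separately. The comparison yields $g(\eps B_n)/\bigl(\eps g(B_n)\bigr)\ge\min\{\eps,1\}/\eps=1/\max\{1,\eps\}$ for the first summand, while for the second one checks $g(zB_n)/g(B_n)\ge\min\{z,1\}\ge z/\max\{1,\eps\}$ for all $z\in(0,\eps)$; merging the regimes $z\le1$ and $1<z<\eps$ into the single bound $z/\max\{1,\eps\}$ is the only mildly delicate point. Consequently
$$
\roz(g,\eps,\gamma)\ge\frac{1}{\max\{1,\eps\}}\Bigl(\gamma|\essM(\eps)|+\sup_{0<z<\eps}zL_n(z)\Bigr)=\frac{\roz(\eps,\gamma)}{\max\{1,\eps\}},
$$
and \eqref{Rozovskii2018} gives \eqref{Rozovsky_ineq} with $\CR(\eps,\gamma)$ admissibly equal to $\max\{1,\eps\}\,\rozC(\eps,\gamma)$, i.e. $\rozC(\eps,\gamma)$ for $\eps\le1$ and $\eps\,\rozC(\eps,\gamma)$ for $\eps>1$.

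The equalities $\CE(\eps,\cdot)=\essC(\eps,\cdot)$ and $\CR(\eps,\cdot)=\rozC(\eps,\cdot)$ on $(0,1]$, together with $\CE(+0,\cdot)=\CR(+0,\cdot)=\infty$, then follow by taking $g=g_*$ (so that the $g$-fraction reduces to the $g_*$-fraction, whose least admissible constant is $\essC$, resp. $\rozC$, by definition) and invoking the known divergence of $\essC(\eps,\gamma),\rozC(\eps,\gamma)$ as $\eps\to0+$. I expect no serious analytic obstacle here: the whole argument is a monotonicity reduction powered by \eqref{gmin_g_gmax_ineq}. The genuinely separate point is the sharpness claim $\CR(\infty,\cdot)=\infty$, which is a lower bound and cannot come from the reduction; I would obtain it by exhibiting, as in the $\gamma=1$ treatment of our earlier work, a family of distributions along which $\Delta_n/\roz(g,\infty,\gamma)$ is unbounded.
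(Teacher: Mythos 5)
Your proposal is correct, and its engine is the same as the paper's: both proofs derive \eqref{AW_ineq} and \eqref{Rozovsky_ineq} from the already established estimates \eqref{Esseen2018} and \eqref{Rozovskii2018}, using property \eqref{gmin_g_gmax_ineq} of the class $\G$ (with $a=B_n$) to show that every $g$-fraction dominates the corresponding $g_*$-fraction up to the factor $\max\{1,\eps\}$, and both settle the side claims the same way --- equality of constants on $(0,1]$ by specializing to a concrete element of $\G$ (you use $g_*$, the paper uses $g_0$), divergence at $\eps\to0+$ from the known unboundedness of $\essC(+0,\cdot)$, $\rozC(+0,\cdot)$, and $\CR(\infty,\cdot)=\infty$ by deferring to the i.i.d.\ counterexample family of the earlier paper, which is exactly the citation the paper invokes. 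The one genuine difference is in the execution of the Rozovskii half for $\eps>1$: the paper channels the comparison through the extremal function $g_0(z)=\min\{z,B_n\}$, so it needs the representation \eqref{Rozovsky_g0} from Theorem~\ref{ThLn(g)Properties} plus the substitution bound $\sup_{0<z<1}zL_n(z)\ge\eps^{-1}\sup_{0<z<\eps}zL_n(z)$ (indeed the paper proves Theorem~\ref{ThLn(g)Properties} first precisely because its proof of Theorem~\ref{ThAWanalog} uses \eqref{Rozovsky_g0}), whereas you bound the ratios pointwise, $g(zB_n)/g(B_n)\ge\min\{z,1\}\ge z/\max\{1,\eps\}$ on $(0,\eps)$ and $g(\eps B_n)/\bigl(\eps g(B_n)\bigr)\ge1/\max\{1,\eps\}$, obtaining $\roz(g,\eps,\gamma)\ge\roz(\eps,\gamma)/\max\{1,\eps\}$ for every $g\in\G$ in one stroke and treating the regimes $\eps\le1$ and $\eps>1$ uniformly; your ``mildly delicate point'' is indeed correct in both regimes. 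What your route buys is a shorter argument that makes Theorem~\ref{ThAWanalog} logically independent of Theorem~\ref{ThLn(g)Properties}, with identical constants; what the paper's route buys is the explicit identification of $g_0$ as the minimizer of the fractions over $\G$, a fact of independent interest that it records anyway in \eqref{gmin_gmax_ineq}--\eqref{const_g}.
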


Observe that inequalities~\eqref{AW_ineq}, \eqref{Rozovsky_ineq} with $\gamma=1$ reduce to \eqref{EsseenAW2018}, \eqref{Rozovskii2019} with $\CE(\,\cdot\,)=\CE(\,\cdot\,,1)$, $\CR(\,\cdot\,)=\CR(\,\cdot\,,1)$, and with $g=g_*$, $\eps\in(0,1]$ to \eqref{Esseen2018}, \eqref{Rozovskii2018}, respectively. Moreover,~\eqref{AW_ineq} also improves Wang--Ahmad inequality~\eqref{AW_classic_ineq} due to moving the sum $\sum_{k=1}^n$ inside the modulus sign and under the least upper bound $\sup_{z>0}$ with the range becoming bounded to  $z<\eps B_n$, so that $A_6\le\CE(\infty,1)$. We call inequalities~\eqref{AW_ineq} and~\eqref{Rozovsky_ineq} analogues of  Esseen--Wang--Ahmad's and Rozovskii's inequalities. 

%Recall that by the constants $\CE$, $\CR$, $\essC$, $\rozC$, and $A_k$, $k=\overline{1,6}$,  we mean the corresponding least possible values.  
The next statement summarizes all what was said above on the constants $A_k,$ $k=\overline{1,6}$.

\begin{cor}\label{ThERWAineqConstRelations} 
We have
\begin{gather*}
0.5409<A_1=A_2\le1.87,
\\
0.5409<A_2\le A_4\le\essC(1,1)\le2.73,
\\
0.5409<A_1\le A_6\le\CE(\infty,1)\le\essC(1,1)\le2.73,
\\
A_3\le \min\{A_4,A_6,\essC(\infty,1)\}\le 2.66,
\\
A_5\le \rozC(1,1)\le 2.73,
\end{gather*}
with equalities $A_4=\essC(1,1),$ $A_6=\CE(\infty,1),$ $A_3=\essC(\infty,1),$  $A_5=\rozC(1,1)$ in the i.i.d.case.
\end{cor}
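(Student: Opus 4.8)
The plan is to read off the corollary by assembling the individual facts already established in the introduction and imported from the cited works; no new analytic estimate is required, only careful bookkeeping of the direction in which each ``least constant'' comparison points. I would start from the two numerical anchors: the Korolev--Popov lower bound $A_1\ge\sup_{x>0}\abs{1/(1+x^2)-\Phi(-x)}=0.54093\ldots>0.5409$ and the Korolev--Dorofeyeva upper bound $A_1\le1.87$. Together with the Korolev--Popov identity $A_1=A_2$ (equivalently, the fact, implied by property~\eqref{gmin_g_gmax_ineq}, that $g_0(\,\cdot\,,B_n)$ minimizes the right-hand side of~\eqref{Katz_Petrov_ineq}), these settle the first line and furnish the lower bounds $0.5409<A_2$ and $0.5409<A_1$ needed in the second and third lines.

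For the remaining upper-bound chains I would argue purely by monotonicity of the majorizing fractions. The inequality $A_2\le A_4$ holds because the integrand $\E X_k^2\min\{\abs{X_k},z\}=\E\abs{X_k}^3\I(\abs{X_k}<z)+z\sigma_k^2(z)$ is increasing in $z$, so~\eqref{Essen_limited_ineq} dominates the $\eps=1$ case~\eqref{Osipov_eps1_ineq}; then $A_4\le\essC(1,1)$ is the $\eps=\gamma=1$ instance of~\eqref{Esseen2018}, and $\essC(1,1)\le2.73$ is read from Table~\ref{Tab:EsseenC1(L)}. The Wang--Ahmad line is analogous: $A_1\le A_6$ since~\eqref{AW_classic_ineq} with $g=g_*$ is exactly~\eqref{Katz_Petrov_ineq}, $A_6\le\CE(\infty,1)$ since~\eqref{EsseenAW2018} sharpens~\eqref{AW_classic_ineq} (it pulls the summation inside $\abs{\,\cdot\,}$ and the supremum), and $\CE(\infty,1)\le\essC(1,1)\le2.73$ by the stated bound $\CE(\eps)\le\essC(\min\{1,\eps\},1)$. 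For the fourth line, each of $A_3\le A_4$, $A_3\le A_6$ and $A_3\le\essC(\infty,1)$ follows because Esseen's fraction in~\eqref{Esseen_ineq} \emph{dominates} the corresponding majorant: restricting the range of the supremum as in~\eqref{Essen_limited_ineq}, or moving $\sum_k$ inside $\abs{\,\cdot\,}$ and $\sup_z$ as in~\eqref{AW_classic_ineq} and in the $(\infty,1)$ case of~\eqref{Esseen2018}, can only decrease the bound, so any one of those three inequalities implies~\eqref{Esseen_ineq} with the same constant; hence $A_3$ is at most each of them, and $\essC(\infty,1)\le2.66$ by~\eqref{Esseen2018}. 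Finally the Rozovskii line $A_5\le\rozC(1,1)\le2.73$ is the $\eps=\gamma=1$ case of~\eqref{Rozovskii2018} together with Table~\ref{Tab:RozovskiiC1(L)}.

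The only place where equality, rather than a one-sided bound, is claimed is the i.i.d. statement, and this is where I expect the single substantive step to lie. The key observation is that all the sharpening operations used above become vacuous when $X_1,\ldots,X_n$ share a common law $F$: then $\sum_{k=1}^n\mu_k(z)=n\mu_1(z)$ and $\sum_{k=1}^n\sigma_k^2(z)=n\sigma_1^2(z)$, so that $\abs{\sum_k\mu_k(z)}=\sum_k\abs{\mu_k(z)}$ and $\sup_z\sum_k\sigma_k^2(z)=\sum_k\sup_z\sigma_k^2(z)$. Consequently, for i.i.d. summands the majorants of~\eqref{Esseen2018} at $(\infty,1)$ and at $(1,1)$ coincide \emph{identically} with Esseen's fractions in~\eqref{Esseen_ineq} and~\eqref{Essen_limited_ineq}, the majorant of~\eqref{Rozovskii2018} at $(1,1)$ coincides with Rozovskii's fraction in~\eqref{Rozovskii_ineq}, and for every $g\in\G$ the majorant of~\eqref{EsseenAW2018} at $\eps=\infty$ coincides with the Wang--Ahmad majorant in~\eqref{AW_classic_ineq}. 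Since, by the convention fixed in the introduction, each constant is defined as the least admissible factor, and two inequalities with identical majorants over the class of i.i.d. sequences must have the same least constant, I obtain $A_3=\essC(\infty,1)$, $A_4=\essC(1,1)$, $A_5=\rozC(1,1)$ and $A_6=\CE(\infty,1)$ in the i.i.d. case. I expect no genuine obstacle beyond keeping straight the orientation of the least-constant comparisons; all the analytic content has already been carried by the individual inequalities proved earlier and in the references.
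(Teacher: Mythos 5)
Your overall route is the same as the paper's: Corollary~\ref{ThERWAineqConstRelations} is established there by pure bookkeeping, assembling the comparisons already recorded in the introduction, and your treatment of the i.i.d.\ equalities (identical majorants over the class of i.i.d.\ sequences force equal least constants, given the ``least possible constant'' convention) is precisely the paper's observation. Most of your individual steps are also correctly oriented: the numerical anchors and $A_1=A_2$ from Korolev--Popov and Korolev--Dorofeyeva; $A_2\le A_4$ from the monotonicity of $\E X_k^2\min\{\abs{X_k},z\}$; $A_4\le\essC(1,1)$, $A_5\le\rozC(1,1)$, $A_3\le\essC(\infty,1)$, and $A_6\le\CE(\infty,1)$ by domination of majorants (sum moved inside the modulus and the supremum); $\CE(\infty,1)\le\essC(1,1)$ from the stated bound $\CE(\eps)\le\essC(\min\{1,\eps\},1)$.

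There is, however, one genuine error: your justification of $A_1\le A_6$. You assert that~\eqref{AW_classic_ineq} with $g=g_*$ ``is exactly''~\eqref{Katz_Petrov_ineq}. It is not: with $g(z)=z$ the ratio $g(z)/z$ is identically $1$, so~\eqref{AW_classic_ineq} becomes exactly Esseen's inequality~\eqref{Esseen_ineq} --- that specialization is what yields $A_3\le A_6$ --- whereas~\eqref{Katz_Petrov_ineq} with $g=g_*$ is the Berry--Esseen bound with full third moments $\E\abs{X_k}^3$, a different and generally larger (possibly infinite) majorant. Since~\eqref{Katz_Petrov_ineq} is a statement for \emph{every} $g\in\G$, deducing it from~\eqref{AW_classic_ineq} requires a domination valid for every $g\in\G$, namely
$$
\sup_{z>0}\frac{g(z)}{z}\Big(\abs{\mu_k(z)}+z\sigma_k^2(z)\Big)\ \le\ \E X_k^2\,g(\abs{X_k}),
$$
which follows from the two defining properties of $\G$: for $\abs{x}<z$ the monotonicity of $z/g(z)$ gives $\abs{x}^3g(z)/z\le x^2g(\abs{x})$, and for $\abs{x}\ge z$ the monotonicity of $g$ gives $x^2g(z)\le x^2g(\abs{x})$; this is the content of the reference the paper cites for this very step (\cite{GabdullinMakarenkoShevtsova2019}, p.~648). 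The same conflation recurs in your fourth line, where you describe~\eqref{AW_classic_ineq} as obtained by ``moving $\sum_k$ inside'': in~\eqref{AW_classic_ineq} the sum stands outside both the modulus and the supremum, and $A_3\le A_6$ is pure specialization $g=g_*$, not a sum-inside domination. Both defects are repairable in a few lines, but as written the step $A_1\le A_6$ does not follow from your argument.
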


In~\cite{GabMakShe2020TVP} we complete corollary~\ref{ThERWAineqConstRelations} by showing, in particular, that
$$
A_3>0.3703,\quad A_5>0.5685.
$$

It is easy to see that the both fractions $L_{\bullet\,,\,n}\in\{\es,\roz\}$ are invariant with respect to scale transformations of a function $g\in\G$:
\begin{equation} \label{invariant}
L_{\bullet\,,\,n}(cg,\,\cdot\,,\,\cdot\,) = L_{\bullet\,,\,n}(g,\,\cdot\,,\,\cdot\,),
%\quad \roz(cg,\,\cdot\,,\,\cdot\,)=\roz(c,\,\cdot\,,\,\cdot\,),
\quad c>0.
\end{equation}
%\isolate{so that, with $n$ and $F_1,\ldots,F_n$ being fixed, we may assume that $g(B_n)=B_n$, where convenient.}\footnote{$g_{\textsc{c}}(B_n)=1$!!!} 
Moreover, extremal properties of the functions 
$$
g_0(z):= B_ng_0(z,B_n)=\min\{z, B_n\}, \quad g_1(z):= B_ng_1(z,B_n)=\max\{z, B_n\},\quad z>0,
$$
in~\eqref{gmin_g_gmax_ineq} with $a:=B_n$ yield
\begin{equation}\label{gmin_gmax_ineq}
L_{\bullet\,,\,n}(g_0,\,\cdot\,,\,\cdot\,) \le L_{\bullet\,,\,n}(g,\,\cdot\,,\,\cdot\,) \le L_{\bullet\,,\,n}(g_1,\,\cdot\,,\,\cdot\,),
\end{equation}
%\begin{eqnarray}\label{gmin_gmax_ineq}
%\es(g_0,\,\cdot\,,\,\cdot\,) \le \es(g,\,\cdot\,,\,\cdot\,) \le \es(g_1,\,\cdot\,,\,\cdot\,),\\
%\roz(g_0,\,\cdot\,,\,\cdot\,) \le \roz(g,\,\cdot\,,\,\cdot\,) \le \roz(g_1,\,\cdot\,,\,\cdot\,),
%\end{eqnarray}
hence the ``universal'' upper bounds for the appearing constants $\CE$ and $\CR$ in~\eqref{AW_ineq} and~\eqref{Rozovsky_ineq} are attained at $g=g_0$.
It is also obvious that with the extremal $g$ the both fractions $L_{\bullet\,,\,n}\in\{\es,\roz\}$ satisfy
\begin{equation}\label{identical_g}
L_{\bullet\,,\,n}(g_0,\eps,\,\cdot\,) = L_{\bullet\,,\,n}(g_*,\eps,\,\cdot\,)
%\quad \roz(g_0,\eps,\,\cdot\,) = \roz(g_*,\eps,\,\cdot\,),
\quad\text{for } \eps\le1,
\end{equation}
\begin{equation}\label{const_g}
L_{\bullet\,,\,n}(g_1,\eps,\,\cdot\,) = L_{\bullet\,,\,n}(g_{\textsc{c}},\eps,\,\cdot\,)
%\quad \roz(g_1,\eps,\,\cdot\,) = \roz(g_{\textsc{c}},\eps,\,\cdot\,),  
\quad\text{for } \eps\le1,
\end{equation}
where, as before, 
$$
g_*(z)=z,\quad g_{\textsc{c}}(z)\equiv1,\quad z>0.
$$

We also note that our proof of inequality~\eqref{AW_ineq} is completely  different from the one by Wang and Ahmad in~\cite{WangAhmad2016} who used a direct method based on a smoothing inequality and estimates for characteristic functions, similarly to the proof of Esseen's inequality~\eqref{Esseen_ineq} in~\cite{Esseen1969}. Our proof of~\eqref{AW_ineq}  is based on estimate~\eqref{Esseen2018} and property~\eqref{gmin_g_gmax_ineq} of the class $\G$ yielding inequality~\eqref{gmin_gmax_ineq} which makes our proof much simpler and shorter.

The next statement establishes some interesting properties or alternative expressions for the introduced fractions $\es(g,\eps,\gamma)$ and $\roz(g,\eps,\gamma)$.

\begin{theorem}\label{ThLn(g)Properties}
For all $\eps>0$ and $\gamma>0$ we have
%\begin{equation}\label{ess(g0,1)=ess(g0,inf)}
%\es(g_0,1,\gamma)=\es(g_0,\infty,\gamma),
%\end{equation}
\begin{equation}\label{Rozovsky_g0}
\roz(g_0,\eps, \gamma) = 
\frac{\gamma}{\eps\vee1} \abs{M_n(\eps)} + \sup_{0 < z < \eps\wedge1} z L_n(z),
%= \frac{1}{B_n^3} \left( \frac{\gamma}{\eps\vee1} \abs{\sum_{k=1}^n \mu_k(\eps B_n)} + \sup_{0 < z < (\eps\wedge1)B_n} z \sum_{k=1}^n \sigma_k^2(z) \right),
\end{equation}
\begin{equation}\label{g_1_Esseen}
1 \le \es(g_1,\eps, \gamma) \le \max\{\eps, 1\}\cdot \max\{\gamma, 1\} ,
\end{equation}
\begin{equation}\label{g_1_Rozovsky}
1 \le \roz(g_1,\eps, \gamma) \le \max\{\eps, 1\} \cdot (\gamma + 1),
\end{equation}
in particular, 
$$
\es(g_1,\eps,\gamma)\equiv1\quad\text{for }\gamma\le1 \text{ and } \eps\le1,
$$
and, in the symmetric case, also
$$
\es(g_1,\eps, \gamma)\equiv\roz(g_1,\eps, \gamma)\equiv1\quad\text{for all } \gamma>0 \text{ and } \eps\le1.
$$
\end{theorem}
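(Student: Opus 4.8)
The plan is to reduce every claim to the two elementary identities
$$
\frac{g_0(zB_n)}{g_0(B_n)}=\min\{z,1\},\qquad \frac{g_1(zB_n)}{g_1(B_n)}=\max\{z,1\},
$$
which follow directly from $g_0(z)=\min\{z,B_n\}$, $g_1(z)=\max\{z,B_n\}$, and then to exploit two structural facts about the summands. The first is that $L_n(\cdot)$ is non-increasing and left-continuous with $L_n(0+)=1$ (the last by monotone convergence, since $\I(|X_k|\ge zB_n)\uparrow\I(|X_k|>0)$ and $\sum_k\E X_k^2=B_n^2$). The second is the pointwise bound
$$
\osiM(z)+zL_n(z)=\frac1{B_n^3}\sum_{k=1}^n\E X_k^2\min\{|X_k|,zB_n\}\le z,
$$
the displayed identity being the normalized form of the monotone quantity $\E X_k^2\min\{|X_k|,z\}$ already recorded in the excerpt, and the inequality coming from $\min\{|X_k|,zB_n\}\le zB_n$. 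I would also note $|\essM(z)|\le\osiM(z)$ by the triangle inequality.

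For the identity~\eqref{Rozovsky_g0}, the prefactor of $|\essM(\eps)|$ is $\gamma\min\{\eps,1\}/\eps=\gamma/(\eps\vee1)$, which is exactly the first term, so it remains to show $\sup_{0<z<\eps}\min\{z,1\}L_n(z)=\sup_{0<z<\eps\wedge1}zL_n(z)$. For $\eps\le1$ this is immediate since $z\wedge1=z$ on $(0,\eps)$. For $\eps>1$ I would split the supremum at $z=1$: on $(0,1]$ the integrand is $zL_n(z)$, while on $(1,\eps)$ it equals $L_n(z)\le L_n(1)$ by monotonicity; left-continuity gives $zL_n(z)\to L_n(1)$ as $z\to1^-$, so $\sup_{0<z<1}zL_n(z)\ge L_n(1)$ and the part over $(1,\eps)$ is dominated. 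Hence the supremum collapses to $\sup_{0<z<1}zL_n(z)$, as required.

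For the $g_1$-estimates~\eqref{g_1_Esseen}--\eqref{g_1_Rozovsky}, the lower bounds $\ge1$ follow by letting $z\to0+$, where in both fractions the integrand dominates $L_n(z)\to1$ (using $z\vee1\ge1$ and dropping the non-negative $\gamma$-term). For the upper bound of $\es(g_1,\eps,\gamma)=\sup_{0<z<\eps}\frac{z\vee1}{z}\bigl(\gamma|\essM(z)|+zL_n(z)\bigr)$, the crucial step is to \emph{combine} the two contributions rather than estimate them separately: $\gamma|\essM(z)|+zL_n(z)\le\max\{\gamma,1\}\bigl(\osiM(z)+zL_n(z)\bigr)\le\max\{\gamma,1\}\,z$. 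Multiplying by $(z\vee1)/z$ gives $\max\{\gamma,1\}$ for $z\le1$ and at most $\max\{\gamma,1\}z<\max\{\gamma,1\}\eps$ for $z>1$, hence $\max\{\eps,1\}\max\{\gamma,1\}$. In $\roz(g_1,\eps,\gamma)$ the two terms carry different arguments and cannot be merged, so I would bound them separately: the first by $\gamma(\eps\vee1)$ using $|\essM(\eps)|\le\osiM(\eps)\le\eps$, the second by $\eps\vee1$ using $L_n\le1$ and $z\vee1\le\eps\vee1$, giving $\max\{\eps,1\}(\gamma+1)$. This contrast is precisely why Esseen's fraction earns the better factor $\max\{\gamma,1\}$ and Rozovskii's only $\gamma+1$.

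The special cases then drop out: for $\gamma\le1$, $\eps\le1$ the upper bound for $\es(g_1,\eps,\gamma)$ is $1$, matching the lower bound; and in the symmetric case $\essM\equiv0$ kills the $\gamma$-terms, so for $\eps\le1$ both fractions reduce to $\sup_{0<z<\eps}L_n(z)=L_n(0+)=1$ by monotonicity. I expect the only genuinely delicate point to be the endpoint bookkeeping in the supremum collapse of~\eqref{Rozovsky_g0} (open versus half-open interval at $z=1$), which is exactly where left-continuity of $L_n$ is needed; every other estimate is a direct consequence of the two displayed inequalities above.
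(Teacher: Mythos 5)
Your proposal is correct and takes essentially the same route as the paper's own proof: the same extremal identities $g_0(zB_n)/g_0(B_n)=\min\{z,1\}$ and $g_1(zB_n)/g_1(B_n)=\max\{z,1\}$, the same key bound $\osiM(z)+zL_n(z)\le z$ (the paper states it divided by $z$), the same monotonicity/left-continuity argument for collapsing the supremum in the $g_0$-identity, the same $z\to0+$ limit for the lower bounds, and the same separate treatment of the two terms in $\roz(g_1,\eps,\gamma)$ versus the merged treatment in $\es(g_1,\eps,\gamma)$. The only cosmetic differences are that you handle $\eps\le1$ and $\eps>1$ uniformly via the factor $(z\vee1)/z$ where the paper splits into cases, and you settle the symmetric case by direct computation rather than by the paper's $\gamma\to0+$ limiting argument.
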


\section{Proofs}

The proof of theorem~\ref{ThAWanalog} uses~\eqref{Rozovsky_g0}, so, we start with the proof of theorem~\ref{ThLn(g)Properties}.

\begin{proof}[Proof of theorem~\ref{ThLn(g)Properties}]
Recall that $g_0(z)=\min\{z,B_n\}$, $g_1(z)=\max\{z,B_n\},$ $z>0$,
$$
\es(g,\eps, \gamma) = \sup_{0 < z < \eps} \frac{g(zB_n)}{zg(B_n)} \left( \gamma\abs{\essM(z)} + zL_n(z) \right),
$$
$$
\roz(g,\eps, \gamma) = \gamma\,\frac{g(\eps B_n)}{\eps g(B_n)} \abs{\essM(\eps)} + \sup_{0 < z < \eps}\frac{g(zB_n)}{g(B_n)}\,L_n(z),
$$
$$
\essM(z)=\frac1{B_n^3}\sum_{k=1}^n \mu_k(zB_n) = \frac1{B_n^3}\sum_{k=1}^n \E X_k^3\I(|X_k|<zB_n),
$$
$$
L_n(z) = \frac{1}{B_n^2} \sum_{k=1}^n \sigma_k^2(zB_n) = \frac{1}{B_n^2} \sum_{k=1}^n \E X_k^2\I (\abs{X_k} \ge zB_n).
$$

Representation~\eqref{Rozovsky_g0} is trivial for $\eps \le 1$ due to the observation that $g_0(z)=z$ for $z\le B_n$. As for $\eps > 1$, we have
$$
\roz(g_0,\eps, \gamma) =
\frac{\gamma}{\eps}\abs{\essM(\eps)} + \max\Big\{ \sup_{0 < z < 1} zL_n(z), \sup_{1\le z<\eps}L_n(z)\Big\}
$$
 for all $\gamma>0$. Since $L_n(z)$ is left-continuous and non-increasing, we have
$$
\sup_{1\le z<\eps}L_n(z)=L_n(1)\le\sup_{0 < z < 1} zL_n(z),
$$
and hence,
$$
\roz(g_0,\eps, \gamma) =\frac{\gamma}{\eps}\abs{\essM(\eps)} + \sup_{0<z<1}zL_n(z),
$$
which coincides with the right-hand side of~\eqref{Rozovsky_g0} for $\eps\ge1$.

To prove~\eqref{g_1_Esseen}, first, observe that~\eqref{Lambda(eps)<=eps} yields
\begin{equation}\label{Mn(eps)<=eps} 
\abs{M_n(\eps)}\le \frac{1}{B_n^3} \sum_{k=1}^n \E|X_k|^3\I(|X_k|<\eps B_n)=\osiM(\eps)\le\eps,\quad \eps>0.
\end{equation}
If $\eps \le 1$, then
$$
\es(g_1,\eps,\gamma)= \sup_{0 < z < \eps} \left\{ \frac{\gamma}{z}\abs{M_n(z)}+L_n(z)\right\}\le \max\{1,\gamma\}\sup_{0 < z < \eps} \left\{ \frac{\abs{M_n(z)}}{z}+L_n(z)\right\}
$$
$$
\le \frac{\max\{1, \gamma\}}{B_n^2}\sup_{0 < z < \eps} \left\{ \frac{1}{zB_n} \sum_{k=1}^n \E|X_k|^3\I(|X_k|<z B_n) + \sum_{k = 1}^n \E X_k^2\I(|X_k|\ge z B_n) \right\}
$$
$$
\le \max\{\gamma, 1\} = \max\{\eps, 1\} \cdot \max\{\gamma, 1 \},\quad \gamma>0.
$$
If $\eps > 1$, then
$$
\es(g_1,\eps, \gamma) = \max \left\{ \es(g_1,1, \gamma), \sup_{1\le z<\eps} \left\{ \gamma \abs{M_n(z)} + zL_n(z)\right\} \right\},\quad\gamma>0.
$$
As we have just seen, $\es(g_1,1, \gamma)\le \max\{\gamma, 1\}.$ And the second expression here can be bounded from above in the following way:
$$
\sup_{1\le z<\eps} \left\{ \gamma \abs{M_n(z)} + zL_n(z)\right\} 
$$
$$
\le  \frac{\max\{ \gamma, 1 \}}{B_n^3} \sup_{1\le z < \eps} \left\{\sum_{k=1}^n \E|X_k|^3\I(|X_k|\le zB_n) + zB_n\sum_{k=1}^n\sigma_k^2(z) \right\}
$$
$$
\le \max\{ \gamma, 1 \}\sup_{1\le z<\eps}z=
\eps \cdot \max\{\gamma, 1\},
$$
which completes the proof of the upper bound in~\eqref{g_1_Esseen}. To prove the lower bound in~\eqref{g_1_Esseen}, observe that for every $\eps>0$ and $\gamma>0$ we have
$$
\es(g_1,\eps, \gamma) \ge\lim_{z\to0+}\frac{g_1(zB_n)}{zg_1(B_n)} \left( \gamma\abs{\essM(z)} + zL_n(z) \right)= \lim_{z\to0+}\left(\frac{\gamma}z\abs{\essM(z)} + L_n(z) \right)
$$
$$
\ge\lim_{z\to0+}L_n(z)=1.
$$

Let us prove~\eqref{g_1_Rozovsky}. If $\eps \le 1$, then for all $\gamma>0$ we have
$$
\roz(g_1,\eps, \gamma) = \frac\gamma\eps\abs{\essM(\eps)} + \sup_{0<z<\eps}L_n(z) =\frac\gamma\eps\abs{\essM(\eps)} + L_n(0)=\frac\gamma\eps\abs{\essM(\eps)} +1,
$$
which trivially yields the lower bound $\roz(g_1,\eps, \gamma)\ge1$ and, with~\eqref{Mn(eps)<=eps}, also the upper bound $\roz(g_1,\eps, \gamma) \le\gamma+1$. Combining the two-sided bounds, we obtain~\eqref{g_1_Rozovsky} for $\eps\le1$.  If $\eps > 1$, then
$$
\roz(g_1,\eps, \gamma) = \gamma \abs{M_n(\eps)} + \max\Big\{L_n(0), \sup_{1\le z<\eps} z L_n(z)\Big\},
$$
whence trivially a lower bound 
$$
\roz(g_1,\eps, \gamma)\ge \max\Big\{L_n(0), \sup_{1\le z<\eps} z L_n(z)\Big\}\ge L_n(0)=1
$$
follows. Furthermore, using $\sup\limits_{1\le z<\eps} z L_n(z)\le\eps\sup\limits_{1\le z<\eps}L_n(z)=\eps L_n(0)=\eps$ and also~\eqref{Mn(eps)<=eps}, we obtain an upper bound
$$
\roz(g_1,\eps, \gamma)\le  \gamma \eps +  \max\{1,\eps\}= \eps(\gamma + 1),
%=\max\{\eps, 1 \}(\gamma + 1).
$$
which proves~\eqref{g_1_Rozovsky} also for $\eps>1$.

The concluding remarks follow from the observation that $M_n(z)\equiv0$ for $z\ge0$ in the symmetric case, and hence, the fractions $\es(g,\eps,\gamma)$, $\roz(g,\eps,\gamma)$ are constant with respect to $\gamma>0$. Letting $\gamma\to0+$, we obtain two-sided bounds 
$$
1\le L_{\bullet\,,\,n}(g_1,\eps,\gamma)\le \max\{\eps,1\},\quad \gamma>0,
$$
for both fractions $L_{\bullet\,,\,n}\in\{\es,\roz\}$, whence it follows that  $\es(g_1,\eps,\gamma)=\roz(g_1,\eps,\gamma)\equiv1$ for $\eps \le 1$ and $\gamma>0$.
\end{proof}

\begin{proof}[Proof of theorem~\ref{ThAWanalog}]
Recall that $g_*(z) \equiv z\in\G$. Let us fix any $g \in \G, n, F_1, F_2, \ldots, F_n$ and consider two cases.

If $\eps \le 1$, then, due to~\eqref{identical_g} and~\eqref{gmin_gmax_ineq}, the fractions $L_{\bullet\,,\,n}\in\{\es,\roz\}$ in~\eqref{Esseen2018}, \eqref{Rozovskii2018}, \eqref{AW_ineq}, \eqref{Rozovsky_ineq} satisfy
$$
L_{\bullet\,,\,n}(\eps,\,\cdot\,) =L_{\bullet\,,\,n}(g_*,\eps,\,\cdot\,) =L_{\bullet\,,\,n}(g_0,\eps,\,\cdot\,) \le  L_{\bullet\,,\,n}(g,\eps,\,\cdot\,),\quad g\in\G.
$$
Using this inequality in~\eqref{Esseen2018} and \eqref{Rozovskii2018} we obtain
$$
\Delta_n \le \essC(\eps, \,\cdot\,) \cdot \es(\eps, \,\cdot\,)\le  \essC(\eps, \,\cdot\,) \cdot \es(g,\eps, \,\cdot\,),
$$
$$
\Delta_n \le \rozC(\eps, \,\cdot\,) \cdot \es(\eps, \,\cdot\,)\le  \rozC(\eps, \,\cdot\,) \cdot \es(g,\eps, \,\cdot\,) 
$$
for all $g\in\G$, which yields~\eqref{AW_ineq}, \eqref{Rozovsky_ineq} with $\CE(\eps,\,\cdot\,)\le\essC(\eps,\,\cdot\,)$ and $\CR(\eps,\,\cdot\,)\le\rozC(\eps,\,\cdot\,)$. Observing that $g_0\in\G$ we conclude that inequalities are identities, in fact.

Let $\eps > 1$. Since $\es(g,\eps,\gamma)$ is non-decreasing with respect to $\eps$, we have
$$
\Delta_n \le \essC(1, \,\cdot\,) \es(g, 1,\,\cdot\,) \le \essC(1, \,\cdot\,)  \es(g, \eps,\,\cdot\,) ,
$$
for all $g\in\G$, which yields~\eqref{AW_ineq} with $\CE(\eps,\,\cdot\,)\le\essC(1,\,\cdot\,)$ for all $\eps>1$. 
Furthermore, using~\eqref{Rozovsky_g0} we obtain
$$
\roz(g_0,\eps, \gamma) = 
\frac{\gamma}{\eps} \abs{M_n(\eps)} + \sup_{0<z<1} z L_n(z).
$$
The second term here, due to the monotonicity of $L_n(z)$, can be bounded from below as
$$
\sup_{0<z<1}zL_n(z)=\sup_{0<z<\eps}\frac{z}{\eps}L_n\left(\frac{z}\eps\right)\ge \frac1\eps\sup_{0<z<\eps}zL_n(z),
$$
hence, we obtain a lower bound
$$
\roz(g_0,\eps, \gamma) \ge
\frac{1}{\eps} \left (\gamma\abs{M_n(\eps)} + \sup_{0<z<\eps} z L_n(z)\right) =\frac1\eps\,\roz(g_*,\eps,\gamma).
$$ 
Now observing that
$$
\roz(\eps,\,\cdot\,) =\roz(g_*,\eps,\,\cdot\,) \le \eps\roz(g_0,\eps,\,\cdot\,) \le \eps\roz(g,\eps,\,\cdot\,),\quad g\in\G,
$$
and using~\eqref{Rozovskii2018}, we obtain~\eqref{Rozovsky_ineq} with $\CR(\eps,\,\cdot\,)\le\eps\rozC(\eps,\,\cdot\,)$ for $\eps\ge1$.

The fact that $\CE(+0, \,\cdot\,) = \CR(+0, \,\cdot\,) = +\infty$ for all $\gamma>0$ trivially follows either from unboundedness of $\essC(+0, \,\cdot\,)$ and $\rozC(+0, \,\cdot\,)$, or, directly, from~\eqref{gmin_gmax_ineq} and
$$
\lim_{\eps\to+0}\es(g_0,\eps,\gamma) = \lim_{\eps\to+0}\roz(g_0,\eps,\gamma)=0, \quad  \gamma > 0.
$$
To prove that $\lim\limits_{\eps \to\infty} \CR(\eps,\,\cdot\,) = +\infty$, assume that the random summands $X_1,\ldots,X_n$ are i.i.d. and have finite third-order moments. Then, due to~\eqref{Rozovsky_g0}, we have
$$
\lim\limits_{\eps \to\infty}\roz(g_0,\eps, \gamma) = 
\lim\limits_{\eps \to\infty}\frac{\gamma}{\eps} \abs{M_n(\eps)} + \sup_{0<z<1} zL_n(z) =\sup_{0<z<1} zL_n(z).
$$
On the other hand, in~\cite[Theorem~3]{GabdullinMakarenkoShevtsova2019-2} it is shown that 
$$
\sup_{F} \limsup_{n \rightarrow +\infty} \frac{\Delta_n}{\sup\limits_{0<z<1} zL_n(z)} = +\infty,
$$
where  the least upper bound is taken over all identical distribution functions $F_1=\ldots=F_n=F$ of the random summands $X_1,\ldots,X_n$ with finite third-order moments. Since estimate~\eqref{Rozovsky_ineq} must hold also in this particular case, we should necessarily have ${\CR(\eps,\,\cdot\,)\to\infty}$ as ${\eps\to\infty}$.
\end{proof}

%\bibliography{../../bib/my_pub,../../bib/biblio}
%\bibliographystyle{plain}
%\bibliographystyle{gost2008s}

\end{document}